 \newtheorem{remark}{Remark}
 \newtheorem{lemma}[remark]{Lemma}
 \newtheorem{theorem}[remark]{Theorem}
 \newtheorem{proposition}[remark]{Proposition}
 \newtheorem{corollary}[remark]{Corollary}
\title{The partition dimension of corona product graphs}
\author{J. A.
Rodr\'{\i}guez-Vel\'{a}zquez$^{1}$, I. G.
Yero$^{1}$ and D. Kuziak$^{2}$\\
    \\
$^1${\small Departament d'Enginyeria Inform\`atica i Matem\`atiques,}\\
{\small Universitat Rovira i Virgili,}  {\small Av. Pa\"{\i}sos
Catalans 26, 43007 Tarragona, Spain.} \\{\small
juanalberto.rodriguez\@@urv.cat, ismael.gonzalez\@@urv.cat}
\\
$^2${\small Faculty of Applied Physics and  Mathematics}\\
{\small Gda\'nsk University of Technology,} {\small ul. Narutowicza
11/12 80-233 Gda\'nsk, Poland} \\ {\small
dkuziak\@@mif.pg.gda.pl}\\
}
\date{September 20, 2010}
\begin{document}

\maketitle

\begin{abstract}
Given a set of vertices $S=\{v_1,v_2,...,v_k\}$ of a connected graph
$G$, the metric  representation of a vertex $v$ of $G$ with respect
to $S$ is the vector $r(v|S)=(d(v,v_1),d(v,v_2),...,d(v,v_k))$,
where $d(v,v_i)$, $i\in \{1,...,k\}$ denotes the distance between
$v$ and $v_i$. $S$ is a resolving set of $G$ if for every pair of
vertices $u,v$ of $G$, $r(u|S)\ne r(v|S)$. The metric dimension
$dim(G)$ of $G$ is the minimum cardinality of any resolving set of
$G$. Given an ordered partition $\Pi =\{P_1,P_2, ...,P_t\}$ of
vertices of a connected graph $G$, the partition representation of a
vertex $v$ of $G$, with respect to the partition $\Pi$ is the vector
$r(v|\Pi)=(d(v,P_1),d(v,P_2),...,d(v,P_t))$, where $d(v,P_i)$,
$1\leq i\leq t$, represents the distance between the vertex $v$ and
the set $P_i$, that is $d(v,P_i)=\min_{u\in P_i}\{d(v,u)\}$. $\Pi$
is a resolving partition for $G$ if for every pair of vertices $u,v$
of $G$, $r(u|\Pi)\ne r(v|\Pi)$. The partition dimension $pd(G)$ of
$G$ is the minimum number of sets in any resolving partition for $G$.
Let $G$ and $H$ be two graphs of order $n_1$ and $n_2$ respectively.
The corona product $G\odot H$ is defined as the graph obtained from
$G$ and $H$ by taking one copy of $G$ and $n_1$ copies of $H$ and
then joining by an edge, all the vertices from the $i^{th}$-copy of
$H$ with the $i^{th}$-vertex of $G$. Here we study the relationship between $pd(G\odot H)$ and several parameters of the graphs $G\odot H$,  $G$ and $H$, including
$dim(G\odot H)$, $pd(G)$ and $pd(H)$.
\end{abstract}

{\it Keywords:} Resolving sets, resolving partition, metric dimension, partition dimension, corona graph.

{\it AMS Subject Classification Numbers:}   05C12; 05C76; 05C90; 92E10.

\section{Introduction}
 The concepts of resolvability and location in graphs were described
independently by Harary and Melter \cite{harary} and Slater
\cite{leaves-trees}, to define the same structure in a
graph. After these papers were published several authors
developed diverse theoretical works about this topic
\cite{pelayo1,pelayo2,chappell,chartrand,chartrand1,chartrand2,fehr,landmarks,survey,tomescu}.
 Slater described the usefulness of these ideas into long range
aids to navigation \cite{leaves-trees}. Also, these concepts  have
some applications in chemistry for representing chemical compounds
\cite{pharmacy1,pharmacy2} or to problems of pattern recognition and
image processing, some of which involve the use of hierarchical data
structures \cite{Tomescu1}. Other applications of this concept to
navigation of robots in networks and other areas appear in
\cite{chartrand,robots,landmarks}. Some variations on resolvability
or location have been appearing in the literature, like those about
conditional resolvability \cite{survey}, locating domination
\cite{haynes}, resolving domination \cite{brigham} and resolving
partitions \cite{chappell,chartrand2,fehr}. In this work we are
interested into study the relationship between $pd(G\odot H)$ and several parameters of the graphs $G\odot H$,  $G$ and $H$, including
$dim(G\odot H)$, $pd(G)$ and $pd(H)$.

We begin by giving some basic concepts and notations. Let $G=(V,E)$
be a simple graph.  Let $u,v\in V$ be two different
vertices in $G$, the distance $d_G(u,v)$ between two vertices $u$
and $v$ of $G$ is the length of a shortest path between $u$ and $v$.
If there is no ambiguity, we will use the notation $d(u,v)$ instead
of $d_G(u,v)$.  The diameter of $G$ is defined as $D(G)=\max_{u,v\in
V}\{d(u,v)\}$. Given $u,v\in V$,  $u\sim v$ means that $u$ and $v$
are adjacent vertices. Given a set of vertices
$S=\{v_1,v_2,...,v_k\}$ of a connected graph $G$, the {\it metric
representation} of a vertex $v\in V$ with respect to $S$ is the
vector $r(v|S)=(d(v,v_1),d(v,v_2),...,d(v,v_k))$. We say that $S$ is
a {\it resolving set} for $G$ if for every pair of distinct vertices
$u,v\in V$, $r(u|S)\ne r(v|S)$. The {\it metric dimension} of $G$ is
the minimum cardinality of any resolving set for $G$, and it is
denoted by $dim(G)$.

Given an ordered partition $\Pi =\{P_1,P_2, ...,P_t\}$ of vertices
of a connected graph $G$, the {\it partition  representation} of a
vertex $v\in V$ with respect to the partition $\Pi$ is the vector
$r(v|\Pi)=(d(v,P_1),d(v,P_2),...,d(v,P_t))$, where $d(v,P_i)$,
$1\leq i\leq t$, represents the distance between the vertex $v$ and
the set $P_i$, that is $d(v,P_i)=\min_{u\in P_i}\{d(v,u)\}$. We say
that $\Pi$ is a {\it resolving partition} of $G$ if for every pair
of distinct vertices $u,v\in V$, $r(u|\Pi)\ne r(v|\Pi)$. The {\it
partition dimension} of $G$ is the minimum number of sets in any
resolving partition for $G$ and it is denoted by $pd(G)$. The
partition dimension of graphs is studied in
\cite{chappell,chartrand2,survey,tomescu,yero-jarv-pd-cartesian}.

Let $G$ and $H$ be two graphs of order $n_1$ and $n_2$,
respectively. The corona product $G\odot H$ is defined as the graph
obtained from $G$ and $H$ by taking one copy of $G$ and $n_1$ copies
of $H$ and joining by an edge each vertex from the $i^{th}$-copy of
$H$ with the $i^{th}$-vertex of $G$. We will denote by
$V=\{v_1,v_2,...,v_n\}$   the set of vertices of $G$ and by
$H_i=(V_i,E_i)$   the copy of $H$ such that $v_i\sim v$ for every
$v\in V_i$.

\section{Majorizing  $pd(G\odot H)$}
It was shown in \cite{chartrand2}  that for any nontrivial connected graph $G$ we have
$pd(G)\le  dim(G) + 1.$ Thus,
 \begin{equation}\label{PdDimCorona}
 pd(G\odot H)\le  dim(G\odot H) + 1.
 \end{equation}

 In order to give another interesting relationship between $pd(G\odot H)$ and $dim(G\odot H)$
 that allow us to derive tight bounds on $pd(G\odot H)$, we present the following lemma.

\begin{lemma}{\rm \cite{metricDimCorona}} \label{first-lemma-part-Corona}
Let $G=(V,E)$ be a connected graph of order $n\ge 2$ and let $H$ be
a graph of order at least two.  Let $H_i=(V_i,E_i)$ be the subgraph
of $G\odot H$ corresponding to the $i^{th}$ copy of $H$.

\begin{itemize}

\item[{\rm (i)}]  If $u,v\in V_i$, then $d_{G\odot H}(u,x)=d_{G\odot H}(v,x)$ for every vertex $x$ of $G\odot H$ not belonging to $V_i$.

\item[{\rm (ii)}] If  $S$ is a resolving set for $G\odot H$, then  $V_i\cap S\neq \emptyset$ for every $i\in \{1,...,n\}$.

\item[{\rm (iii)}]   If $S$ is a resolving set for $G\odot H$ of minimum cardinality, then  $V\cap S=\emptyset$.
\end{itemize}
\end{lemma}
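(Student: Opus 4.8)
The plan rests on a single structural observation: in $G\odot H$ the vertex $v_i$ is a cut vertex whose removal separates $V_i$ from the rest of the graph, since the only edges leaving $V_i$ are those joining its vertices to $v_i$. Consequently every shortest path from a vertex of $V_i$ to a vertex outside $V_i$ passes through $v_i$. To prove (i) I would simply note that for $u\in V_i$ and $x\notin V_i$ we have $d(u,x)=d(u,v_i)+d(v_i,x)=1+d(v_i,x)$, because $u\sim v_i$; the right-hand side does not depend on the choice of $u\in V_i$, so $d(u,x)=d(v,x)$ for all $u,v\in V_i$. Part (ii) then follows by contradiction: if $V_i\cap S=\emptyset$ for some $i$, then, since $H$ has order at least two, $V_i$ contains two distinct vertices $u,v$; every element $x$ of $S$ lies outside $V_i$, so by (i) $d(u,x)=d(v,x)$ for all $x\in S$, whence $r(u|S)=r(v|S)$, contradicting that $S$ resolves $G\odot H$.

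For (iii), the main point, I would argue that a minimum resolving set cannot contain any $v_i$ by showing that $v_i$ is always redundant. Suppose $v_i\in S$ with $S$ minimum. By (ii) there is a vertex $w\in V_i\cap S$ (note $w\ne v_i$), and, since $n\ge 2$, for each $j\ne i$ there is a vertex $s_j\in V_j\cap S$ lying outside $V_i\cup\{v_i\}$. The goal is to prove that $S':=S\setminus\{v_i\}$ still resolves $G\odot H$, contradicting minimality. First I would record the distance facts implied by the cut-vertex structure: for any $x\notin V_i\cup\{v_i\}$ and any $y\in V_i$ one has $d(x,y)=d(x,v_i)+1$, so in particular $d(x,v_i)=d(x,w)-1$; while every vertex of $V_i$ is at distance $1$ from $v_i$ and at distance at most $2$ from $w$. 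These show that the $v_i$-coordinate of a vertex is, in most positions, recoverable from coordinates already present in $S'$.

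The core is then a case analysis on a hypothetical pair $a\ne b$ with $r(a|S')=r(b|S')$; since $S$ resolves, such a pair must satisfy $d(a,v_i)\ne d(b,v_i)$, and I would derive a contradiction in every case. When at least one of $a,b$ lies outside $V_i\cup\{v_i\}$, the relation $d(\cdot,v_i)=d(\cdot,w)-1$ together with the equality $d(a,w)=d(b,w)$ (valid since $w\in S'$) pins down the two distances to $v_i$ and forces $d(a,v_i)=d(b,v_i)$, a contradiction; the subcase of an outside vertex against a vertex of $V_i$ is squeezed in the same way, using that $d(\cdot,w)\le 2$ on $V_i$. The genuinely delicate situation, which I expect to be the main obstacle, is separating $v_i$ itself from a vertex $b\in V_i$ that happens to be adjacent to $w$, for there $d(v_i,w)=d(b,w)=1$ so $w$ gives no information. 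Here I would instead invoke the coordinate $s_j$ from another copy: a vertex of $V_i$ is exactly one further from $s_j$ than $v_i$ is, so $d(v_i,s_j)\ne d(b,s_j)$ and the pair is separated by $S'$ after all. This is precisely where the hypothesis $n\ge 2$ enters. Since no pair $a\ne b$ can share its $S'$-representation, $S'$ is a resolving set with $|S'|<|S|$, contradicting the minimality of $S$ and forcing $V\cap S=\emptyset$.
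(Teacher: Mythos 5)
Your proof is correct, but there is nothing in this paper to compare it against: the lemma is stated with a citation to \cite{metricDimCorona} and no proof is reproduced here, so you have supplied a self-contained argument where the paper defers to an external reference. Your argument is the natural one and it is sound. Parts (i) and (ii) are the standard cut-vertex observation: all edges leaving $V_i$ end at $v_i$, so $d(u,x)=1+d(v_i,x)$ for $u\in V_i$ and $x\notin V_i$, and (ii) follows since $|V_i|\ge 2$. For part (iii), your case analysis goes through: the identity $d(x,w)=d(x,v_i)+1$ for $x\notin V_i\cup\{v_i\}$ (with $w\in V_i\cap S$ guaranteed by (ii)) kills every unresolved pair having at least one vertex outside $V_i\cup\{v_i\}$, including the pair $\{v_i, x\}$ itself, where it forces the impossible equality $d(x,w)=1$; and the coordinate $s_j\in V_j\cap S$, $j\ne i$ (which exists precisely because $n\ge 2$ together with (ii)) separates $v_i$ from every $b\in V_i$, since $d(b,s_j)=1+d(v_i,s_j)$. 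The only configuration you do not name explicitly is $a,b\in V_i$, but it is settled instantly by your opening remark that any pair unresolved by $S'=S\setminus\{v_i\}$ must satisfy $d(a,v_i)\ne d(b,v_i)$, whereas all vertices of $V_i$ are at distance $1$ from $v_i$; so this is a cosmetic omission, not a gap. One could even note that your argument for separating $v_i$ from $V_i$ via $s_j$ works for every $b\in V_i$, not only the delicate case $b\sim w$ that you single out, which slightly streamlines the proof.
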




\begin{theorem}\label{general-bound-pd-n1-n2}
Let $G$ be a connected graph of order $n_1\ge 2$ and let $H$ be a graph of order $n_2$. Then
$$pd(G\odot H)\le \frac{1}{n_1}dim(G\odot H)+pd(G)+1.$$
\end{theorem}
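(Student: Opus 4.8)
The plan is to exhibit a resolving partition of $G\odot H$ with at most $\frac{1}{n_1}dim(G\odot H)+pd(G)+1$ classes. I would start from a resolving set $S$ of $G\odot H$ of minimum cardinality. By Lemma~\ref{first-lemma-part-Corona}(iii) we have $S\subseteq\bigcup_{i=1}^{n_1}V_i$, and by (ii) each $S_i:=S\cap V_i$ is nonempty, so $dim(G\odot H)=\sum_{i=1}^{n_1}|S_i|$. By averaging there is an index $j$ with $|S_j|\le\frac{1}{n_1}dim(G\odot H)$. The crucial observation is that, since distances from a vertex of $V_j$ to any vertex outside $V_j$ do not depend on the chosen vertex of $V_j$ (Lemma~\ref{first-lemma-part-Corona}(i)), the set $S_j$ alone must separate all pairs of $V_j$; transporting $S_j$ through the canonical isomorphism $V_i\cong V(H)$ yields a set $W$ of $|S_j|$ ``positions'' in $H$ that resolves every copy $V_i$ under the truncated (distance-at-most-$2$) metric that the corona induces inside a single copy.

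I would then build the partition by \emph{reusing the same pattern $W$ in every copy simultaneously}. For each position $k\in\{1,\dots,|W|\}$ I form a class $C_k$ containing the $k$-th marked vertex of each of the $n_1$ copies, giving $|W|\le\frac{1}{n_1}dim(G\odot H)$ classes. I place all remaining (unmarked) vertices of all copies into one class $C_0$, and I distribute the vertices of $G$ according to a resolving partition $\{A_1,\dots,A_t\}$ of $G$ with $t=pd(G)$, forming classes $D_a=\{v_i:\ v_i\in A_a\}$. Altogether this is a partition into $1+|W|+pd(G)$ classes, which lies within the claimed bound.

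The heart of the verification is to show this partition resolves, which I reduce to comparing vertices lying in a common class. First, for a vertex $h$ of copy $i$ the distance to $C_k$ is attained inside copy $i$ (a marked vertex of another copy lies at distance at least $3$, while inside the copy the distance is at most $2$), so $(d(h,C_1),\dots,d(h,C_{|W|}))$ records exactly the truncated representation of $h$ in its own copy; hence two vertices of the \emph{same} copy are separated by some $C_k$ because $W$ resolves each copy. Second, a direct computation gives $d(h,D_a)=1+d_G(v_i,A_a)$ for every $h$ in copy $i$, so the $D$-coordinates of such an $h$ equal $\mathbf{1}+r(v_i\mid\{A_1,\dots,A_t\})$; since the partition of $G$ is resolving, vertices of \emph{different} copies receive different $D$-vectors. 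The same computation $d(v_i,D_a)=d_G(v_i,A_a)$ shows the vertices of $G$ are mutually separated, and inspecting which coordinate equals $0$ separates the three kinds of vertices (a vertex of $G$, a marked vertex of a copy, an unmarked vertex of a copy) from one another. Combining the three cases yields that the partition is resolving.

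The step I expect to be the main obstacle is the first one: recognizing that one should apply the pattern of the \emph{cheapest} copy $S_j$ to all copies at once, since this is precisely what replaces the naive sum $\sum_i|S_i|$ or maximum $\max_i|S_i|$ of the per-copy landmarks by the average $\frac{1}{n_1}dim(G\odot H)$, while relying on the resolving partition of $G$ — rather than additional landmarks — to tell the copies apart. A minor point to dispatch separately is the degenerate situation $W=V(H)$, in which $C_0$ is empty and should simply be omitted, the bound then holding trivially.
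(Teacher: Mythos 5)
Your proof is correct, and it follows the same overall blueprint as the paper's: build a partition consisting of one class per landmark position (taken across all $n_1$ copies of $H$), one class collecting the remaining copy vertices, and the classes of a resolving partition of $G$; then run the same case analysis (same copy, different copies, vertices of $G$, mixed pairs resolved automatically because they lie in different classes). Where you genuinely diverge is the step that produces the factor $\frac{1}{n_1}$. The paper keeps each copy's own landmarks $S_i=S\cap V_i$ and asserts, without proof in this paper, that a minimum resolving set of $G\odot H$ distributes equally, $|S_i|=\frac{1}{n_1}dim(G\odot H)$ for every $i$ (a fact resting on the structural analysis of \cite{metricDimCorona}); its class $B_j$ then contains the $j$-th landmark of each copy, so different copies may carry genuinely different patterns. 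You instead select the cheapest copy $S_j$ by averaging and replicate that single pattern in every copy through the isomorphism $V_i\cong V(H)$, justified by the observation that corona distances inside one copy form the truncated metric $\min\{d_H(\cdot,\cdot),2\}$, identical in all copies. Your variant is self-contained: it needs only Lemma \ref{first-lemma-part-Corona} plus averaging, and it would survive even if minimum resolving sets could split unevenly among the copies; the price is the (correct) extra transport argument. The paper's version is shorter but leans on the unproved equidistribution claim, so your route is arguably the more robust of the two.
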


\begin{proof}
Let $S$ be a resolving set for $G\odot H$ of minimum cardinality. By
Lemma  \ref{first-lemma-part-Corona}  (ii) and (iii) we conclude
that $S=\cup_{i=1}^{n_1}S_i$, where $\emptyset\ne S_i\subset V_i$.
We note that $|S_i|=\frac{|S|}{n_1}=\frac{1}{n_1}dim(G\odot H)$ for
every $i\in \{1,...,n_1\}$. In order to build a resolving partition for $G\odot H$, we need to introduce some additional notation.  Let
$\Pi(G)=\{W_1,W_2,...,W_{pd(G)}\} $ be a resolving partition for $G$,
let $A=\cup_{i=1}^{n_1}(V_i-S_i)$, let $S_i=\{v_{i1}, v_{i2}, ...,
v_{it}\}$, and let $B_j=\cup_{i=1}^{n_1}\{v_{ij}\}$, $j=1,...,t$.
Let us prove that $\Pi=\{A,B_1,...,B_t,W_1,...,W_{pd(G)}\}$ is a
resolving partition for $G\odot H$. Let $x,y$ be two  different
vertices  of $G\odot H$. We have the following cases.

Case 1. $x,y\in V_i$.  If $x\in S_i$ or $y\in S_i$  then $x$ and $y$
belong to different sets of $\Pi$,  so $r(x| \Pi)\ne r(y|\Pi)$.   We
suppose $x,y\in V_i-S_i$.  Since $S$ is a resolving set for $G\odot
H$, we have $r(x|S)\ne r(y|S)$. By Lemma
\ref{first-lemma-part-Corona} (i), $d_{G\odot H}(x,u)=d_{G\odot
H}(y,u)$ for every vertex $u$ of $G\odot H$ not belonging to $V_i$.
So, there exists $v\in S_i$ such that $d_{G\odot H}(x,v)\ne
d_{G\odot H}(y,v)$.  Thus, either ($v\sim x$ and $v\not\sim y$) or
($v\not\sim x$ and $v \sim y$). In the first case we have $d_{G\odot
H}(x,v)=d_{H_i}(x,v)=1$ and $d_{G\odot H}(y,v)=2\le d_{H_i}(y,v)$.
The case $v\not\sim x$ and $v\sim y$ is analogous. Therefore, for
every $x,y\in V_i$ there exists $v_{il}\in S_i$ such that $d_{G\odot
H}(x,B_l)=d_{G\odot H}(x,v_{il})\ne d_{G\odot H}(y,v_{il})=d_{G\odot
H}(y,B_l)$.

Case 2. $x\in V_i$ and $y\in V_j$, $j\ne i$. There exists $W_k\in
\Pi(G)$ such that $d_G(v_i,W_k)\ne d_G(v_j,W_k)$.  Thus, $d_{G\odot
H}(x,W_k)=1+d_G(v_i,W_k)\ne d_G(v_j,W_k)+1=d_{G\odot H}(y,W_k)$.

Case 3. $x,y\in V$. There exists $W_k\in \Pi(G)$ such that
$d_G(x,W_k)\ne d_G(y,W_k)$. Thus, $d_{G\odot H}(x,W_k)\ne d_{G\odot
H}(y,W_k)$.

Case 4. $x\in V$ and $y\not\in V$. In this case $x$ and $y$ belong to different sets of $\Pi$, so $r(x| \Pi)\ne r(y|\Pi)$.

Therefore, $\Pi$ is a resolving partition for  $G\odot H$.
\end{proof}

We denote by $K_n$ and $P_n$ the complete graph and the path graph of order $n$, respectively.
The following proposition allows us to conclude that for every connected graphs $G$ and $H$  of order greater than or equal to two such that  $G\odot H\not\cong K_{n_1}\odot P_{2}$ and $G\odot H\not\cong K_{n_1}\odot P_{3}$,   the equation in Theorem \ref{general-bound-pd-n1-n2} is never worse than equation (\ref{PdDimCorona}).

\begin{proposition}
Let $G$ and $H$ be two  connected graph of order greater than or equal to two. Let $n_1$ denote the order of $G$. If $G\odot H\not\cong K_{n_1}\odot P_{2}$ and $G\odot H\not\cong K_{n_1}\odot P_{3}$, then
$$dim(G\odot H) \ge \frac{n_1}{n_1-1}pd(G).$$
\end{proposition}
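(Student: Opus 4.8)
The plan is to start from a minimum resolving set $S$ of $G\odot H$ and exploit its forced structure. By Lemma~\ref{first-lemma-part-Corona}~(ii) and (iii), such an $S$ avoids $V$ entirely and meets every copy $V_i$, so $S=\bigcup_{i=1}^{n_1}S_i$ with $\emptyset\ne S_i\subseteq V_i$, and hence $dim(G\odot H)=\sum_{i=1}^{n_1}|S_i|\ge n_1$. I would then split the argument according to whether or not $G$ is complete, since the factor $pd(G)$ behaves very differently in the two regimes.

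First I would treat the case $G\not\cong K_{n_1}$. Recall the characterization (see \cite{chartrand2}) that $pd(G)=n_1$ holds if and only if $G\cong K_{n_1}$; since $G$ is connected of order $n_1$ and is not complete, this gives $pd(G)\le n_1-1$. Combining this with $dim(G\odot H)\ge n_1$ yields
$$\frac{n_1}{n_1-1}\,pd(G)\le \frac{n_1}{n_1-1}(n_1-1)=n_1\le dim(G\odot H),$$
which is the desired inequality with room to spare.

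The substantive case is $G\cong K_{n_1}$, where $pd(G)=n_1$ and the target becomes $dim(K_{n_1}\odot H)\ge \frac{n_1^2}{n_1-1}$. The key step is to show that each $|S_i|\ge 2$. For this I would use the local metric of the corona: for distinct $u,w\in V_i$ one has $d_{G\odot H}(u,w)=1$ when $u$ and $w$ are adjacent and $d_{G\odot H}(u,w)=2$ otherwise, because the vertex $v_i$ joins all of $V_i$. By Lemma~\ref{first-lemma-part-Corona}~(i) the vertices of $V_i$ can be told apart only by $S_i$, so if $|S_i|=1$, say $S_i=\{s\}$, the single coordinate $d_{G\odot H}(\cdot,s)$ takes values only in $\{0,1,2\}$ on $V_i$. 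This forces $|V_i|\le 3$, and a short check of the connected graphs on two or three vertices shows that the only possibilities are $H\cong P_2$ and $H\cong P_3$ (with $s$ an endpoint), while $H\cong K_3$ already fails. Since the hypothesis excludes $G\odot H\cong K_{n_1}\odot P_2$ and $G\odot H\cong K_{n_1}\odot P_3$, and here $G\cong K_{n_1}$, we must have $H\not\cong P_2$ and $H\not\cong P_3$; hence $|S_i|\ge 2$ for every $i$. Therefore $dim(K_{n_1}\odot H)=\sum_{i=1}^{n_1}|S_i|\ge 2n_1$, and a direct computation (equivalent to $n_1(n_1-2)\ge 0$) shows $2n_1\ge \frac{n_1^2}{n_1-1}$ for all $n_1\ge 2$, completing this case.

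The hard part will be the complete-graph case: pinning down exactly which $H$ admit a size-one $S_i$, and confirming that these are precisely the two excluded coronas, requires the truncated-distance observation together with a careful enumeration of the small connected graphs. The arithmetic reconciling $2n_1$ with the bound $\frac{n_1^2}{n_1-1}$ is routine but must be checked, as is the implicit point that $G\odot H\cong K_{n_1}\odot P_k$ together with $G\cong K_{n_1}$ forces $H\cong P_k$, so that the excluded isomorphism classes indeed translate into $H\not\cong P_2$ and $H\not\cong P_3$.
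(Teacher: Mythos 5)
Your proof is correct, but it takes a genuinely different route from the paper's. The paper starts from the external bound $dim(G\odot H)\ge n_1\,dim(H)$ (its equation (\ref{cotainfdim})) and splits on $dim(H)$: when $dim(H)\ge 2$ the trivial estimate $pd(G)\le n_1$ already suffices, and when $dim(H)=1$ it invokes the characterization from \cite{chartrand} that $H$ must be a path, then subdivides on whether $G\cong K_{n_1}$; only in the subcase $G\cong K_{n_1}$, $H\cong P_{n_2}$, $n_2\ge 4$ does it argue (by pigeonhole on distances $1$ and $2$) that each copy must contain at least two vertices of a minimum resolving set. You instead split immediately on whether $G\cong K_{n_1}$, using only the weaker bound $dim(G\odot H)\ge n_1$ (straight from Lemma \ref{first-lemma-part-Corona} (ii) and (iii)) together with the characterization from \cite{chartrand2} that $pd(G)=n_1$ if and only if $G\cong K_{n_1}$, which disposes of all non-complete $G$ at once; in the complete case you run essentially the same truncated-distance argument as the paper (distances inside a copy take values in $\{0,1,2\}$), but applied to an arbitrary connected $H$: a singleton $S_i$ forces $|V_i|\le 3$, and enumerating the connected graphs on at most three vertices leaves only $P_2$ and $P_3$, which are excluded by hypothesis. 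Your route is more self-contained --- it needs neither the inequality $dim(G\odot H)\ge n_1\,dim(H)$ nor the characterization of graphs with metric dimension one --- and it yields the slightly more general intermediate fact that $dim(K_{n_1}\odot H)\ge 2n_1$ for every connected $H\not\cong P_2,P_3$, which the paper obtains only for paths; in exchange, the paper's stronger external bound settles the whole case $dim(H)\ge 2$ in one line. One small correction to your closing remark: the ``implicit point'' you flag is stated backwards. What your argument actually needs is the trivial direction, namely that $G\cong K_{n_1}$ and $H\cong P_k$ force $G\odot H\cong K_{n_1}\odot P_k$ (so the excluded coronas do rule out $H\cong P_2$ and $H\cong P_3$), not the converse decomposition statement; so there is no hidden difficulty there.
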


\begin{proof}
It was shown in \cite{metricDimCorona} that
\begin{equation}\label{cotainfdim}
dim(G\odot H)\ge n_1dim(H).
\end{equation}
 So we differentiate two cases.
Case 1: $dim(H)\ge2$. Since $n_1\ge 2$, we have
$2n_1(n_1-1)\ge n_1^2$. Thus,
$$dim(H)n_1(n_1-1)\ge 2n_1(n_1-1)\ge n_1^2\ge n_1pd(G).$$  Hence, by equation (\ref{cotainfdim}) we obtain
$dim(G\odot H)(n_1-1)\ge n_1pd(G).$

Case 2: $dim(H)=1$. It was shown in \cite{chartrand} that a connected graph $H$  has dimension $1$ if and only if $H$ is a path graph. So we have $H\cong P_{n_2}$.
Now we consider two subcases.

Subcase 2.1: $G\not\cong K_{n_1}$ and $n_2\ge 2$. Then by equation (\ref{cotainfdim}) we obtain $$(n_1-1)dim(G\odot P_{n_2})\ge n_1(n_1-1)\ge n_1pd(G)$$ and, as a consequence, $dim(G\odot H) \ge \frac{n_1}{n_1-1}pd(G).$

Subcase 2.2: $G\cong K_{n_1}$ and $n_2\ge 4$. Let $S$ be  a resolving set for $K_{n_1}\odot P_{n_2}$ of minimum cardinality. As above we denote by $\{v_1,...,v_{n_1}\}$ the set of vertices of $K_{n_1}$ and by $H_i=(V_i,E_i)$, $i\in \{1,...,n_1\}$ the corresponding copies of $P_{n_2}$ in $K_{n_1}\odot P_{n_2}$. By Lemma \ref{first-lemma-part-Corona} (ii)  we know that
 $V_i\cap S\ne \emptyset$, for every $i\in \{1,...,n_1\}$. We suppose $V_i\cap S =\{x_i\}$. In this case, since $n_2\ge4$ and $H_i\cong P_{n_2}$, there exist
$a,b\in V_i$ such that either $d_{K_{n_1}\odot P_{n_2}}(a,x_i)=d_{K_{n_1}\odot P_{n_2}}(b,x_i)=1$ or $d_{K_{n_1}\odot P_{n_2}}(a,x_i)=d_{K_{n_1}\odot P_{n_2}}(b,x_i)=2$. Thus, By Lemma \ref{first-lemma-part-Corona} (i) we conclude that $r(a|S)=r(b|S)$, a contradiction. Hence, $|V_i\cap S| \ge 2$ and, as a consequence, $dim(K_{n_1}\odot P_{n_2})\ge 2n_1$. Then
$$dim(K_{n_1}\odot P_{n_2})(n_1-1)\ge 2n_1(n_1-1)\ge n_1^2= n_1pd(K_{n_1}).$$
Therefore, the result follows.
\end{proof}

In
\cite{metricDimCorona}  we showed that for every connected
graph $G$  of order $n_1\ge 2$ and every graph $H$ of order $n_2\ge
2$,
$$dim(G\odot H)\le \left\{\begin{array}{ll}
n_1(n_2-\alpha-1) & \textrm{for  $\alpha\ge 1$ and $\beta\ge 1$,}\\
\\
n_1(n_2-\alpha) & \textrm{for  $\alpha\ge 1$ and $\beta = 0$,}\\
\\
n_1(n_2-1) & \textrm{for  $\alpha =0$,}
\end{array}\right.
$$
where  $\alpha$ denotes the number of connected components of $H$  and  $\beta$ denotes the number of isolated vertices of $H$.

By using the above bound on $dim(G\odot H)$   we obtain the following direct consequence of Theorem \ref{general-bound-pd-n1-n2}.

\begin{corollary} \label{boundOrderPd}
Let $G$ be a connected graph of order $n_1\ge 2$ and let $H$ be a
graph of order $n_2\ge 2$.  Let  $\alpha$ be the number of connected
components of $H$ of order greater than one and let $\beta$ be the
number of isolated vertices of $H$. Then
$$pd(G\odot H)\le \left\{\begin{array}{ll}
pd(G)+n_2-\alpha & \textrm{for  $\alpha\ge 1$ and $\beta\ge 1$,}\\
\\
pd(G)+n_2-\alpha+1 & \textrm{for  $\alpha\ge 1$ and $\beta = 0$,}\\
\\
pd(G)+n_2 & \textrm{for  $\alpha =0$.}
\end{array}\right.
$$
\end{corollary}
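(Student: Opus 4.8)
The plan is to read the result off directly from Theorem~\ref{general-bound-pd-n1-n2}, feeding into it the upper bound on $dim(G\odot H)$ recalled immediately before the corollary (from \cite{metricDimCorona}). The hypotheses of the corollary ($G$ connected of order $n_1\ge 2$ and $H$ of order $n_2\ge 2$) are exactly those required both by the theorem and by that dimension bound, so no preliminary reduction is needed. In particular, $G\odot H$ is connected, because every vertex of the $i^{th}$ copy of $H$ is adjacent to $v_i$; hence $dim(G\odot H)$ and $pd(G\odot H)$ are well defined and the theorem applies verbatim.

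First I would invoke Theorem~\ref{general-bound-pd-n1-n2} to obtain
$$pd(G\odot H)\le \frac{1}{n_1}\,dim(G\odot H)+pd(G)+1.$$
The key structural observation is that each of the three cases of the quoted bound on $dim(G\odot H)$ has the form $n_1\cdot(\text{expression in }n_2\text{ and }\alpha)$, so multiplying by $\tfrac{1}{n_1}$ cancels the factor $n_1$ cleanly and leaves a quantity depending only on $n_2$ and $\alpha$, to which one simply adds $pd(G)+1$.

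Then I would substitute case by case, keeping the \emph{same} meaning of $\alpha$ and $\beta$ in the dimension bound as in the statement of the corollary (namely, $\alpha$ the number of components of $H$ of order greater than one, $\beta$ the number of isolated vertices). When $\alpha\ge 1$ and $\beta\ge 1$, plugging $dim(G\odot H)\le n_1(n_2-\alpha-1)$ gives $pd(G\odot H)\le (n_2-\alpha-1)+pd(G)+1=pd(G)+n_2-\alpha$. When $\alpha\ge 1$ and $\beta=0$, plugging $dim(G\odot H)\le n_1(n_2-\alpha)$ gives $pd(G\odot H)\le (n_2-\alpha)+pd(G)+1=pd(G)+n_2-\alpha+1$. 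When $\alpha=0$, plugging $dim(G\odot H)\le n_1(n_2-1)$ gives $pd(G\odot H)\le (n_2-1)+pd(G)+1=pd(G)+n_2$. These are precisely the three bounds asserted.

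Since every step is a direct substitution followed by the cancellation of $n_1$ and the collection of the additive constant, there is no genuine obstacle beyond bookkeeping. The only point that truly deserves care is the interpretation of $\alpha$: only the reading ``number of components of order greater than one'' makes all three cases match (under the alternative ``total number of components'' the first case would carry a spurious extra term in $\beta$). Once that convention is fixed consistently across the dimension bound and the corollary, the three one-line computations close the argument.
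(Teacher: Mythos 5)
Your proof is correct and is essentially the paper's own argument: the paper presents this corollary with no proof beyond calling it a direct consequence of Theorem~\ref{general-bound-pd-n1-n2} and the quoted bound on $dim(G\odot H)$, which is exactly your case-by-case substitution and cancellation of the factor $n_1$. Your resolution of the $\alpha$ ambiguity is also the right one --- the text preceding the dimension bound sloppily says ``number of connected components,'' but only the reading ``components of order greater than one'' is consistent (e.g.\ for $H$ an empty graph the other reading would make the first case negative), and it is the convention under which the three computations match the stated bounds.
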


The reader is referred to \cite{metricDimCorona} for several upper bounds on $dim(G\odot H)$ which lead to bounds on $pd(G\odot H)$.

\begin{theorem}\label{partition-plus-partition}
Let $G$ and $H$ be two connected graphs of order $n_1\ge 2$ and  $n_2\ge 2$, respectively.  If $D(H)\le 2$, then
$$pd(G\odot H)\le pd(G)+pd(H).$$
\end{theorem}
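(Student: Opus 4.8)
The plan is to build a resolving partition of $G\odot H$ with exactly $pd(G)+pd(H)$ classes by combining a resolving partition of $H$, replicated across all $n_1$ copies, with a resolving partition of $G$ placed on the vertices of $G$. Concretely, let $\Pi(G)=\{W_1,\dots,W_{pd(G)}\}$ be a resolving partition of $G$ and let $\{Q_1,\dots,Q_{pd(H)}\}$ be a resolving partition of $H$; writing $Q_j^{(i)}$ for the copy of $Q_j$ sitting inside $V_i$, set $B_j=\bigcup_{i=1}^{n_1}Q_j^{(i)}$ for $j=1,\dots,pd(H)$. I would then claim that $\Pi=\{B_1,\dots,B_{pd(H)},W_1,\dots,W_{pd(G)}\}$ is a resolving partition. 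It is indeed a partition of $V(G\odot H)$: the classes $W_k$ cover $V$, the classes $B_j$ cover $\bigcup_i V_i$, and these two vertex sets are disjoint.

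The first thing I would record are the relevant distance formulas, and this is exactly where the hypothesis $D(H)\le 2$ enters. Since any two vertices of a fixed copy $V_i$ are joined both by paths inside $H_i$ and by the length-$2$ path through $v_i$, and since $D(H)\le 2$, one has $d_{G\odot H}(x,u)=d_{H_i}(x,u)$ for all $x,u\in V_i$. Consequently, for $x\in V_i$ the contribution of its own copy to $d_{G\odot H}(x,B_j)$ is $d_{H_i}(x,Q_j^{(i)})\le 2$, whereas every other copy $V_k$ with $k\ne i$ lies at distance $d_G(v_i,v_k)+2\ge 3$; hence $d_{G\odot H}(x,B_j)=d_{H_i}(x,Q_j^{(i)})$, which, under the natural identification of $V_i$ with $V(H)$, equals $d_H(x,Q_j)$, the partition representation of $x$ in $H$. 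Similarly $d_{G\odot H}(x,W_k)=1+d_G(v_i,W_k)$ for $x\in V_i$, while $d_{G\odot H}(v_i,B_j)=1$ for every $j$ (each $Q_j^{(i)}$ is nonempty and adjacent to $v_i$), and $d_{G\odot H}(v_i,W_k)=d_G(v_i,W_k)$.

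With these formulas in hand, the resolving property is a four-case check mirroring the proof of Theorem~\ref{general-bound-pd-n1-n2}. If $x,y\in V_i$, the $B_j$-coordinates reproduce the $H$-representations of $x$ and $y$, which differ for some $j$ because $\{Q_1,\dots,Q_{pd(H)}\}$ resolves $H$. If $x\in V_i$ and $y\in V_j$ with $i\ne j$, the $W_k$-coordinates are $1+d_G(v_i,W_k)$ and $1+d_G(v_j,W_k)$, which differ for some $k$ because $\Pi(G)$ resolves $G$. If $x,y\in V$, the $W_k$-coordinates are the $G$-representations of $x$ and $y$ and again differ for some $k$. Finally, if $x\in V$ and $y\notin V$, say $y\in Q_{l_0}^{(j)}$, then $d_{G\odot H}(y,B_{l_0})=0$ while $d_{G\odot H}(x,B_{l_0})=1$, so $r(x|\Pi)\ne r(y|\Pi)$. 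This exhausts all cases and gives $pd(G\odot H)\le pd(G)+pd(H)$.

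The step I expect to require the most care is the distance computation $d_{G\odot H}(x,B_j)=d_{H_i}(x,Q_j^{(i)})$ for $x\in V_i$: one must be certain that the minimum defining this distance is realized inside the home copy $V_i$ and is not corrupted by the vertices of $Q_j$ lying in other copies. This is precisely the role of the assumption $D(H)\le 2$, which caps the home-copy distance at $2$ and keeps it strictly below the distance $\ge 3$ to any foreign copy. Without this hypothesis the within-copy coordinates would be truncated at $2$ (via the detour through $v_i$), the fine distinctions encoded by the resolving partition of $H$ could collapse, and Case~1 would fail.
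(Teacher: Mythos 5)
Your proposal is correct and follows essentially the same route as the paper: the identical partition $\{B_1,\dots,B_{pd(H)},W_1,\dots,W_{pd(G)}\}$ obtained by merging the copies of a resolving partition of $H$ across all $V_i$ and adjoining a resolving partition of $G$, with $D(H)\le 2$ used in exactly the same way to guarantee that within-copy distances in $H_i$ survive in $G\odot H$ so the $B_j$-coordinates reproduce the $H$-representations. Your explicit distance formulas and the unified treatment of vertices in distinct copies (via $1+d_G(v_i,W_k)$, avoiding the paper's subcase split) are minor streamlinings, not a different argument.
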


\begin{proof}
Let $P=\{A_1,A_2,...A_k\}$ be a resolving partition in $G$ and let
$Q_i=\{B_{i1},B_{i2},...B_{it}\}$ be a resolving  partition in the
corresponding copy $H_i$ of $H$. Let
$B_j=\bigcup_{i=1}^{n_1}B_{ij}$, $j\in \{1,...,t\}.$  We will show
that $$\Pi=\{A_1,A_2,...,A_k,B_1,B_2,...,B_t\}$$ is a resolving partition for $G\odot H$. Let $x,y$ be two different vertices of
$G\odot H$. If $x,y\in A_i$, then there exists $A_j\in P\subset
\Pi$, $j\ne i$, such that $d(x, A_j)\ne d(y,A_j)$. On the other
hand, if $x,y\in B_j$, then we have the following cases.

Case 1:  $x,y\in B_{ij}$. Hence, there exists $B_{ik}\in Q_i$, $k\ne
j$, such that $d_{H_i}(x,B_{ik})\ne d_{H_i}(y,B_{ik})$.  Since
$D(H)\le 2$, for every $u\in B_{ij}$ we have
$d_{H_i}(u,B_{ik})=d_{G\odot H}(u,B_{k})$ and
$d_{H_i}(u,B_{ik})=d_{G\odot H}(u,B_k)$. So, we obtain $d_{G\odot
H}(x,B_k)=d_{H_i}(x,B_{ik})\ne d_{H_i}(y,B_{ik})=d_{G\odot
H}(y,B_k).$

Case 2: $x\in B_{ij}$ and $y\in B_{kj}$, $k\ne i$.  If $v_i,v_k\in
A_l$, then there exists $A_q\in P\subset \Pi$ such  that
$d_G(v_i,A_q)\ne d_G(v_k,A_q)$. So, we have $d_{G\odot
H}(x,A_q)=1+d_G(v_i,A_q)\ne 1+d_G(v_k,A_q)=d_{G\odot H}(y,A_q).$

On the other hand, if $v_i\in A_p$ and $v_k\in A_q$, $q\ne p$, then we have
$d_{G\odot H}(x,A_q)=1+d_G(v_i,A_q)>1=d_G(y,A_q)=d_{G\odot H}(y,A_q).$

Thus, for every two different vertices $x,y$ of $G\odot H$ we have
$r(x|\Pi)\ne r(y|\Pi)$ and, as a  consequence, $\Pi$ is a resolving partition for $G\odot H$.
\end{proof}

\begin{corollary}\label{uperboundsumDim}
Let $G$ and $H$ be two connected graphs of order $n_1\ge 2$ and  $n_2\ge 2$, respectively. If $D(H)\le 2$, then
$$pd(G\odot H)\le dim(G)+dim(H)+2.$$
\end{corollary}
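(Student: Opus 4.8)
The plan is to obtain this purely as a corollary of Theorem~\ref{partition-plus-partition}, by feeding in the classical relation between partition dimension and metric dimension. Recall that at the beginning of this section we quoted from \cite{chartrand2} that every nontrivial connected graph $X$ satisfies $pd(X)\le dim(X)+1$. Since $G$ and $H$ are connected of order $n_1\ge 2$ and $n_2\ge 2$ respectively, both are nontrivial connected graphs, so this inequality applies to each of them individually.

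First I would invoke Theorem~\ref{partition-plus-partition}: the hypothesis $D(H)\le 2$ is exactly the one required there, and $G$, $H$ are connected of order at least two, so we immediately get $pd(G\odot H)\le pd(G)+pd(H)$. Next I would bound each summand on the right using $pd(G)\le dim(G)+1$ and $pd(H)\le dim(H)+1$. Chaining these gives
$$pd(G\odot H)\le pd(G)+pd(H)\le (dim(G)+1)+(dim(H)+1)=dim(G)+dim(H)+2,$$
which is the desired bound.

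Since the argument is a direct substitution into an already-established theorem, there is essentially no hard step; the only point that needs a moment of care is checking that the auxiliary bound $pd(X)\le dim(X)+1$ is legitimately applicable to both factors, i.e.\ that each of $G$ and $H$ is genuinely a nontrivial connected graph. This is guaranteed by the standing hypotheses $n_1\ge 2$, $n_2\ge 2$ together with the connectivity of $G$ and $H$. Note also that the assumption $D(H)\le 2$ is used only once, to license the application of Theorem~\ref{partition-plus-partition}, and plays no further role in the chain of inequalities.
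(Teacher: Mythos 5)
Your proof is correct and matches the paper's intended argument: the paper states this result as an immediate corollary of Theorem~\ref{partition-plus-partition} combined with the inequality $pd(X)\le dim(X)+1$ from \cite{chartrand2} quoted at the start of Section~2, which is exactly the chain of inequalities you give. The care you take in checking that $G$ and $H$ are nontrivial connected graphs is appropriate and consistent with the paper's hypotheses.
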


In the next section we will show that all the above inequalities are tight.

\section{Minorizing $pd(G\odot H)$}

\begin{theorem}\label{ThpdcoronaH}
Let $G$ and $H$ be two connected graphs. Let $\Pi$ be a resolving
partition  of $G\odot H$ of minimum cardinality. Let $H_i=(V_i,E_i)$
be the subgraph of $G\odot H$ corresponding to the $i^{th}$-copy of
$H$, and let $\Pi_i$ be the set composed by all non-empty sets of
the form $S\cap V_i$, where $S\in \Pi$. Then $\Pi_i$ is a resolving partition for $H_i$.
\end{theorem}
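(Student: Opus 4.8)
The plan is to argue directly that $\Pi_i$ distinguishes every pair of distinct vertices of $H_i$, relying on the resolving property of $\Pi$ together with part (i) of Lemma~\ref{first-lemma-part-Corona}. First I would record the structural distance identity inside a single copy: if $u,z\in V_i$ are distinct, then any $u$--$z$ path of $G\odot H$ that leaves $V_i$ must pass through $v_i$, so it has length at least $2$; comparing with the path $u\sim v_i\sim z$ of length $2$ and with the paths that stay inside the copy, one gets $d_{G\odot H}(u,z)=\min\{d_{H_i}(u,z),2\}$. Since $H$ is connected each $H_i$ is connected, so these distances are well defined, and taking minima over a nonempty $P\subseteq V_i$ yields, for every $u\in V_i$,
$$d_{G\odot H}(u,P)=\min\{d_{H_i}(u,P),2\}. \quad (\star)$$

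Next I would take two distinct vertices $u,w\in V_i$ and use that $\Pi$ resolves $G\odot H$ to obtain a part $S\in\Pi$ with $d_{G\odot H}(u,S)\ne d_{G\odot H}(w,S)$. Writing $S=(S\cap V_i)\cup(S\setminus V_i)$, I would split the distance as $d_{G\odot H}(u,S)=\min\{d_{G\odot H}(u,S\cap V_i),\,d_{G\odot H}(u,S\setminus V_i)\}$ (with the convention that the distance to the empty set is $+\infty$). Every vertex of $S\setminus V_i$ lies outside $V_i$, so part (i) of Lemma~\ref{first-lemma-part-Corona} gives $d_{G\odot H}(u,S\setminus V_i)=d_{G\odot H}(w,S\setminus V_i)$; hence the ``outside'' contributions to the two minima coincide. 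Therefore the inequality $d_{G\odot H}(u,S)\ne d_{G\odot H}(w,S)$ forces $S\cap V_i\ne\emptyset$ together with $d_{G\odot H}(u,S\cap V_i)\ne d_{G\odot H}(w,S\cap V_i)$.

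Finally I would set $P=S\cap V_i$, which is a nonempty set of the form $S\cap V_i$ and hence a member of $\Pi_i$. Applying $(\star)$ to $P$ for both $u$ and $w$, the inequality $d_{G\odot H}(u,P)\ne d_{G\odot H}(w,P)$ can hold only if $d_{H_i}(u,P)\ne d_{H_i}(w,P)$, because the map $x\mapsto\min\{x,2\}$ sends equal inputs to equal outputs. Thus $P\in\Pi_i$ separates $u$ and $w$ in $H_i$, and since $u,w$ were arbitrary, $\Pi_i$ is a resolving partition of $H_i$. I would also note that this argument uses only that $\Pi$ is resolving and not its minimality.

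The step I expect to be the main obstacle is the clean bookkeeping of the minima in the second paragraph: one must verify that the truncation at $2$ and the outside contribution interact so that an overall difference in $d_{G\odot H}(\cdot,S)$ is localized to a genuine difference in $d_{H_i}(\cdot,P)$ for a single part $P\in\Pi_i$. Identity $(\star)$ is precisely what makes this localization legitimate, so establishing it carefully---in particular that leaving $V_i$ costs at least $2$ and that no detour through $v_i$ beats the length-$2$ path---is the crux.
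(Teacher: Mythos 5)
Your proof is correct and takes essentially the same route as the paper's: both arguments use Lemma \ref{first-lemma-part-Corona} (i) to force the distinguishing set $S\in\Pi$ to meet $V_i$ and to localize the distance difference inside $V_i$, and then exploit the fact that distances within a copy are capped at $2$ in $G\odot H$ to transfer that difference to $d_{H_i}$. The only difference is one of packaging: you encode the transfer in the identity $d_{G\odot H}(u,P)=\min\{d_{H_i}(u,P),2\}$, whereas the paper normalizes the two corona distances to the values $1$ and $2$ and checks the resulting inequality directly.
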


\begin{proof}
If $\Pi_i$ is composed by sets of cardinality one, then the result
immediately follows. Now, let $x,y$ be two  different vertices of
$H_i$ belonging to the same set of $\Pi$. We know that there exists
$S\in \Pi$ such that $d_{G\odot H}(x,S)\ne d_{G\odot H}(y,S)$.
  By Lemma \ref{first-lemma-part-Corona} (i) we have that for every vertex $v$ of
$G\odot H$ not belonging to $V_i$, it follows that $d_{G\odot
H}(x,v)=d_{G\odot H}(y,v)$. Hence we conclude $S'=S\cap V_i \ne
\emptyset$ and  we can assume, without loss of generality, that  $d_{G\odot
H}(x,S)=1$ and $d_{G\odot H}(y,S)=2$. As a result,  $S'\in \Pi_i$ and $d_{H_i}(x,S')=d_{G\odot H}(x,S)=1 <2 =d_{G\odot H}(y,S)\le d_{H_i}(y,S')$. Therefore, the result follows.
\end{proof}

\begin{corollary}
For any connected graphs $G$ and $H$, $$pd (G\odot H)\ge pd(H).$$
\end{corollary}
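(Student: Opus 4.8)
The plan is to read this off directly from Theorem \ref{ThpdcoronaH} together with a counting argument. First I would fix a resolving partition $\Pi$ of $G\odot H$ of minimum cardinality, so that $|\Pi|=pd(G\odot H)$, and fix an arbitrary index $i$. Theorem \ref{ThpdcoronaH} then guarantees that $\Pi_i$---the collection of all non-empty sets of the form $S\cap V_i$ with $S\in\Pi$---is a resolving partition of $H_i$. Since $H_i$ is an isomorphic copy of $H$ and the partition dimension is invariant under graph isomorphism, this already gives $pd(H)=pd(H_i)\le |\Pi_i|$.

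The remaining step is to bound $|\Pi_i|$ from above by $|\Pi|$. Because $\Pi$ is a partition of the vertex set of $G\odot H$, its restriction to $V_i$ (after discarding empty intersections) is again a partition of $V_i$; moreover, distinct sets of $\Pi$ meet $V_i$ in disjoint pieces, so the assignment $S\mapsto S\cap V_i$ sends each member of $\Pi$ to at most one set of $\Pi_i$ and is injective on those $S$ for which the intersection is non-empty. Hence $|\Pi_i|\le |\Pi| = pd(G\odot H)$. Chaining the two inequalities yields $pd(H)\le |\Pi_i|\le pd(G\odot H)$, which is exactly the claimed bound.

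I do not expect a genuine obstacle here, since the substantive work has already been carried out in Theorem \ref{ThpdcoronaH}; the corollary is essentially a bookkeeping consequence. The only points requiring a word of care are verifying that the restriction of a partition to $V_i$ is itself a legitimate partition of $V_i$ (so that the counting inequality $|\Pi_i|\le|\Pi|$ is justified) and invoking the isomorphism $H_i\cong H$ to replace $pd(H_i)$ by $pd(H)$.
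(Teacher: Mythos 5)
Your proposal is correct and is exactly the argument the paper intends: the corollary is stated without a separate proof precisely because it follows immediately from Theorem~\ref{ThpdcoronaH} in the way you describe, restricting a minimum resolving partition of $G\odot H$ to a copy $H_i\cong H$ and noting that the restriction has at most $pd(G\odot H)$ classes. Your added care about the injectivity of $S\mapsto S\cap V_i$ and the isomorphism $H_i\cong H$ is sound bookkeeping, not a deviation.
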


It is easy to check  that for the star graph $K_{1,n}$, $n\ge 2$, it follows $pd(K_{1,n})=n$.  So the following result shows that the above inequality is tight.

\begin{proposition}
Let $G$ denote a connected graph of order $n_1$ and let $n$ be an integer. If $n\ge 2n_1\ge 4$ or $n>2n_1=2$,  then $$pd (G\odot K_{1,n})= n.$$
\end{proposition}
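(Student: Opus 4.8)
The plan is to prove the two inequalities $pd(G\odot K_{1,n})\ge n$ and $pd(G\odot K_{1,n})\le n$ separately. The lower bound is free: $K_{1,n}$ is connected and $pd(K_{1,n})=n$, so the corollary $pd(G\odot H)\ge pd(H)$ obtained above gives $pd(G\odot K_{1,n})\ge n$. Everything then reduces to producing a resolving partition of $G\odot K_{1,n}$ into exactly $n$ classes. Since $D(K_{1,n})=2$, Theorem~\ref{partition-plus-partition} only delivers $pd(G\odot K_{1,n})\le pd(G)+n$, which is too weak; the idea is to build a partition whose $n$ classes simultaneously separate the leaves, the star centers, and the vertices of $G$, reusing the same classes for all three.

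For the construction, let $c_i$ be the center and $u_{i1},\dots,u_{in}$ the leaves of the $i$-th copy $H_i$, so that $N(u_{ij})=\{v_i,c_i\}$ while both $c_i$ and $v_i$ are adjacent to every leaf of $H_i$. I would put $u_{ij}\in P_j$ for all $i,j$, and then place the $2n_1$ remaining ``special'' vertices into pairwise distinct classes by $v_i\in P_{2i-1}$ and $c_i\in P_{2i}$; this assignment fits only because $n\ge 2n_1$. A preliminary observation makes the choice natural: the $n$ leaves of any fixed copy share the same neighborhood $\{v_i,c_i\}$, hence are mutual twins and must lie in $n$ distinct classes of any resolving partition, which re-proves $pd\ge n$ and shows that $n$ classes leave essentially no freedom.

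To check that $\Pi=\{P_1,\dots,P_n\}$ resolves $G\odot K_{1,n}$, I would first invoke the standard reduction that two vertices in different classes are automatically separated by the position of the unique zero coordinate of their vectors, so only vertices sharing a class must be examined; by construction a class holds the leaves $\{u_{ij}\}_i$ together with at most one special vertex. The vectors are quick to compute from the adjacencies above (using Lemma~\ref{first-lemma-part-Corona}(i)): $r(c_i\mid\Pi)$ and $r(v_i\mid\Pi)$ equal $1$ in every coordinate except the zero in their own class, since each is adjacent to a leaf lying in every class, while a leaf $u_{ij}$ has coordinate $0$ at $j$, coordinate $1$ on $\{2i-1,2i\}\setminus\{j\}$, and coordinate $2$ everywhere else. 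Two leaves $u_{ij},u_{i'j}$ from distinct copies in a common class are separated because the index sets $\{2i-1,2i\}$ and $\{2i'-1,2i'\}$ are disjoint (and stay disjoint and nonempty after deleting $j$); a leaf is separated from the special vertex of its class because that special vertex has no coordinate equal to $2$ whereas the leaf does.

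The main obstacle is exactly the last separation together with packing the special vertices into disjoint index pairs, and this is where the hypothesis splits. Disjointness of the pairs $\{2i-1,2i\}$ over $i\in\{1,\dots,n_1\}$ forces $n\ge 2n_1$, while the presence of a coordinate equal to $2$ in every leaf that shares a class with a special vertex requires $n\ge 4$ when $n_1\ge 2$ and only $n\ge 3$ when $n_1=1$ (in the latter case the single leaf without such a coordinate sits alone in its class and is never compared). These are precisely the conditions $n\ge 2n_1\ge 4$ and $n>2n_1=2$; on the excluded boundary $n=2n_1=2$ the construction genuinely breaks, since $v_1$ and $u_{11}$ would then receive identical vectors. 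Having verified that $\Pi$ is resolving, I conclude $pd(G\odot K_{1,n})\le n$, and together with the lower bound this yields the claimed equality.
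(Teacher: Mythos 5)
Your proposal is correct and follows essentially the same route as the paper: the same partition (each leaf $u_{ij}$ placed in class $j$, and the $2n_1$ special vertices placed in pairwise distinct classes indexed by the disjoint pairs $\{2i-1,2i\}$, with only the roles of $v_i$ and the center swapped relative to the paper's labeling), combined with the lower bound $pd(G\odot K_{1,n})\ge pd(K_{1,n})=n$ from the preceding corollary. Your verification by writing out the full partition representations (all-ones vectors for the special vertices versus vectors containing a $2$ for the leaves) is if anything more careful than the paper's coordinate-witness case analysis, and your accounting of exactly where the hypotheses $n\ge 2n_1\ge 4$ and $n>2n_1=2$ are needed matches the construction's actual requirements.
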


\begin{proof}
Let us suppose $n\ge 2n_1\ge 4$.  For each $v_i\in V,$ let $\{a_i,u_{i1},u_{i2},...,u_{in}\}$  be the set
of vertices  of the $i^{th}$ copy of $K_{1,n}$ in $G\odot K_{1,n}$, where $a_i$ is the vertex of degree $n$.

We will show that $\Pi=\{S_1,
S_2,...,S_n\}$ is a resolving partition for $G\odot K_{1,n}$, where
$$\begin{array}{c}
S_1=\{a_1, u_{11},u_{21},...,u_{n_11}\}, \\
  S_2=\{v_1, u_{12},u_{22},...,u_{n_12}\}, \\
  S_3=\{a_2, u_{13},u_{23},...,u_{n_13}\}, \\
  S_4=\{v_2, u_{14},u_{24},...,u_{n_14}\}, \\
  \vdots \\
 S_{2n_1}=\{v_{n_1}, u_{1(2n_1)},u_{2(2n_1)},...,u_{n_1(2n_1)}\}, \\
  S_{2n_1+1}=\{u_{1(2n_1+1)},u_{2(2n_1+1)},...,u_{n_1(2n_1+1)}\}, \\
  \vdots \\
  S_{n}=\{u_{1n},u_{2n},...,u_{n_1n}\}.
\end{array}$$
Let $x,y$ be two different vertices of $G\odot K_{1,n}.$ We differentiate three cases.
Case 1: $x=u_{il}$ and $y=u_{jl}$, $i\ne j$. If $l\ne 2i-1$, then
$$d(u_{il},S_{2i-1})=d(u_{il},a_i)=1<2=d(u_{jl},u_{j(2i-1)})=d(u_{jl}, S_{2i-1}).$$
If $l= 2i-1$, then
$$d(u_{jl},S_{2j-1})=d(u_{jl},a_j)=1<2=d(u_{il},u_{i(2j-1)})=d(u_{il}, S_{2j-1}).$$
Case 2: $x=v_i$ and $y=u_{j(2i)}$. If $j=i$, then
$$d(v_i,S_{i})=d(v_i,u_{ii})=1<2=d(u_{i(2i)},u_{ii})=d(u_{i(2i)}, S_{i}).$$
If $j\ne i$, then
$$d(v_i,S_{i})=d(v_i,u_{ii})=1<2=d(u_{j(2i)},u_{ji})=d(u_{j(2i)}, S_{i}).$$
Case 3: $x=a_i$ and $y=u_{j(2i-1)}$. If $j=i$, then
$$d(a_i,S_{i})=d(a_i,u_{ii})=1<2=d(u_{i(2i-1)},u_{ii})=d(u_{i(2i-1)}, S_{i}).$$
If $j\ne i$, then
$$d(a_i,S_{i})=d(a_i,u_{ii})=1<2=d(u_{j(2i-1)},u_{ji})=d(u_{j(2i-1)}, S_{i}).$$
Therefore, we conclude that  $\Pi$ is a resolving partition for $G\odot K_{1,n}$.

For $n_1=1$   and $n\ge 3$ we denote by $v$ the vertex of $G$, by $a$ the
vertex of $K_{1,n}$ of degree $n$, and by $\{u_{1},u_{2},...,v_{n}\}$ the
set of leaves of $K_{1,n}$. Thus, from
$d(v,u_3)=1<2=d(u_2,u_3)$ and $d(a,u_3)=1<2=d(u_1,u_3)$, we conclude that
$\Pi=\{S_1, S_2,...,S_n\}$ is a resolving partition for $G\odot K_{1,n}$,
 where $S_1=\{a,u_1\}$, $S_2=\{v,u_2\}$,
$S_3=\{u_3\}$, ..., $S_n=\{u_n\}$.
\end{proof}

\begin{lemma}\label{LemmaDist3}
Let $G$ be a connected graph. If  $\Pi$ is a resolving partition for $G\odot K_n$ of cardinality $n+1$, then
for every vertex $v$ of $G\odot K_n$ and every $A\in \Pi$, it follows $d(v,A)\le 3$.
\end{lemma}

\begin{proof}
Let $v_i,v_j$ be two adjacent vertices of $G$ and let $H_l=(V_l,E_l)$ ($l\in \{i,j\}$)  be  the copy of $K_n$ in $G\odot K_n$ such that $v_l$ is adjacent to every
vertex of $H_l$.  If there exists a vertex $v$ of the subgraph of $G\odot K_n$ induced by $V_i\cup V_j\cup \{v_i,v_j\}$ such that $d(v,A)>3$, for some $A\in \Pi$, then,  since different  vertices of $V_i$ (respectively, $V_j$) belong to different sets of $\Pi$, there exist $B,C\in  \Pi$, $u_i\in V_i$ and $u_j\in V_j$ such that $u_i,v_i\in B$ and $u_j,v_j\in C$.

If $B=C$, then $d(u_i,A)=d(v_j,A)$ or $d(v_i,A)=d(u_j,A)$. Hence, $r(u_i|\Pi)=r(v_j|\Pi)$ or $r(v_i|\Pi)=r(u_j|\Pi)$, a contradiction. If $B\ne C$, then there exist two vertices  $u_i' \in V_i\cap C$ and  $u_j' \in V_j\cap B$ and, as a consequence, then $d(u_i',A)=d(v_j,A)$ or $d(v_i,A)=d(u_j',A)$. Thus,  $r(u_i'|\Pi)=r(v_j|\Pi)$ or $r(v_i|\Pi)=r(u_j'|\Pi)$, a contradiction. Therefore, $d(v,A)\le 3$, for every $A\in \Pi$.
\end{proof}

Given a graph $H$ which contains a connected component isomorphic to a complete
graph, we denote by $c(H)$ the  maximum cardinality of
any connected component of $H$  which is isomorphic to a complete
graph.

\begin{theorem}\label{teo-G-c(H)}
Let $G$ be a connected graph of order $n$. Then for any graph $H$ such that
$n > 2c(H)+1\ge 5$,
$$pd(G\odot H)\ge c(H)+2.$$
\end{theorem}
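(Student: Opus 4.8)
The plan is to argue by contradiction: assume $pd(G\odot H)\le c(H)+1$ and fix a resolving partition $\Pi=\{P_1,\dots,P_{c(H)+1}\}$, and then exhibit two vertices with equal representation. Write $c=c(H)$ and, for each $i$, let $K_i\subseteq V_i$ be the copy of the maximum complete component of $H_i$, so that $K_i\cup\{v_i\}$ induces a clique of order $c+1$ in $G\odot H$. The first step is a clique lemma: the $c$ vertices of $K_i$ lie in $c$ distinct members of $\Pi$. Indeed, for $u,w\in K_i$ one checks using Lemma~\ref{first-lemma-part-Corona}(i) that $d_{G\odot H}(u,x)=d_{G\odot H}(w,x)$ for every $x\notin\{u,w\}$ (the vertices of $K_i\setminus\{u,w\}$ and $v_i$ are at distance $1$ from each, those of $V_i\setminus K_i$ at distance $2$, and all vertices outside $V_i$ are equidistant); hence a class resolves $u$ and $w$ only if it contains exactly one of them, forcing $u,w$ into different classes. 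Consequently exactly one class, say $P_{\sigma(i)}$, is disjoint from $K_i$.

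Next I would read off the representations of the vertices of $T:=\bigcup_{i=1}^{n}\big(K_i\cup\{v_i\}\big)$, which has $|T|=n(c+1)$. For $w\in K_i\cup\{v_i\}$ and a class $P_k$ with $k\ne\sigma(i)$ and $w\notin P_k$, the class $P_k$ contains a vertex of $K_i$ adjacent to $w$, so $d(w,P_k)=1$; and $d(w,P_k)=0$ for the class containing $w$. Thus the only coordinate of $r(w|\Pi)$ that can exceed $1$ is the one indexed by $\sigma(i)$. Invoking the distance bound of Lemma~\ref{LemmaDist3}, this coordinate is at most $3$. Hence every $r(w|\Pi)$ with $w\in T$ is a vector of length $c+1$ having exactly one entry equal to $0$, at most one entry in $\{2,3\}$, and all remaining entries equal to $1$.

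Finally I would count such vectors. There are $c+1$ of them with no entry exceeding $1$ (determined by the position of the $0$), and $2c(c+1)$ with a single entry in $\{2,3\}$ (choosing the position of the $0$, the position of the large entry, and its value), so the admissible representations number $(c+1)(2c+1)$. Since $\Pi$ is resolving, the $n(c+1)$ vectors $r(w|\Pi)$, $w\in T$, are pairwise distinct, whence $n(c+1)\le(c+1)(2c+1)$, that is $n\le 2c+1$, contradicting the hypothesis $n>2c+1$. This yields $pd(G\odot H)\ge c+2$.

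The hard part will be the distance estimate feeding the second step. Lemma~\ref{LemmaDist3} is stated for $G\odot K_n$, where \emph{all} vertices of a copy lie in distinct classes; here that property holds only for the clique $K_i$. I would therefore recast its proof around $K_i\cup\{v_i\}$, using an edge $v_iv_j$ of $G$ together with the shared‑class configuration $u_i,v_i\in B$, $u_j,v_j\in C$ and the identity $d(u_i,A)=1+d(v_i,A)$ for classes $A$ far from $V_i$. The delicate point is that the extra vertices of $V_i\setminus K_i$ (absent when $H=K_c$) could in principle realize $d(w,P_{\sigma(i)})$ through a shorter path; one must verify that such vertices lie at distance $d_G(v_i,v_l)+2$, exactly matching the distance to the corresponding $K_l$, so that they create no shortcut invalidating the bound $d(w,P_{\sigma(i)})\le 3$.
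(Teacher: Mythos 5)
Your argument is, in substance, the paper's own proof: the paper restricts attention to the same set $S=\bigcup_{i=1}^{n}(S_i\cup\{v_i\})$ (your $T$), observes that every coordinate of the representation of such a vertex equals $0$ or $1$ except possibly the one indexed by the class missing $S_i$, bounds that exceptional coordinate by $3$ via Lemma~\ref{LemmaDist3}, and concludes with exactly your count $n(c+1)\le(c+1)(2c+1)$. (One cosmetic difference: the paper first proves $pd(G\odot H)\ge c(H)+1$ directly and then assumes equality, whereas you assume $pd(G\odot H)\le c+1$ and take a partition with exactly $c+1$ classes; this is legitimate because any refinement of a resolving partition is again resolving, a fact worth stating explicitly.)

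The value of your last paragraph is real: Lemma~\ref{LemmaDist3} is indeed stated and proved only for $G\odot K_n$, where \emph{all} vertices of each copy occupy distinct classes, and the paper invokes it for $G\odot H$ without comment --- a gap that your write-up at least acknowledges rather than hides. However, your ``delicate point'' is misdiagnosed: vertices of $V_i\setminus K_i$ falling into $P_{\sigma(i)}$ can only \emph{decrease} $d(w,P_{\sigma(i)})$, and smaller values never threaten an upper bound; a value $t\le 1$ simply lands among the $c+1$ vectors already counted, so no verification of the kind you describe is needed. What the adaptation actually requires is this: if $d(v_i,P_{\sigma(i)})\ge 3$, then $P_{\sigma(i)}$ misses the whole closed ball of radius $2$ around $v_i$, hence misses $V_i\cup\{v_i\}\cup V_j\cup\{v_j\}$ for any $G$-neighbor $v_j$ of $v_i$; in particular $\sigma(j)=\sigma(i)$, so every class other than $P_{\sigma(i)}$ meets both cliques $K_i$ and $K_j$, and moreover $d(u,P_{\sigma(i)})=1+d(v_l,P_{\sigma(i)})$ for all $u\in K_l$, $l\in\{i,j\}$, while $|d(v_i,P_{\sigma(i)})-d(v_j,P_{\sigma(i)})|\le 1$. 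Now compare the class $B$ containing $v_i$ and the class $C$ containing $v_j$ with the vertices of $K_i$ and $K_j$ that these two classes contain: in each of the three cases $d(v_j,P_{\sigma(i)})=d(v_i,P_{\sigma(i)})-1$, $d(v_i,P_{\sigma(i)})$, $d(v_i,P_{\sigma(i)})+1$, two distinct such vertices get identical representations (all their coordinates outside $P_{\sigma(i)}$ and their own class equal $1$), a contradiction. This yields $d(w,P_{\sigma(i)})\le 3$ for every $w\in K_i\cup\{v_i\}$, which is all your counting step needs; with that lemma variant in place your proof is complete.
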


\begin{proof}
  We denote by $S_i$ a connected component  of $H_i$ isomorphic
to $K_{c(H)}$, $i\in \{1,...,n\}$. Since  different  vertices of
$S_i$  belong to different sets of any resolving partition for
$G\odot H$, we conclude $pd(G\odot H)\ge c(H)$. If $pd(G\odot
H)=c(H)$, then there exist two vertices $a,b\in S_i\cup \{v_i\}$
such that they belong to the same set of any resolving partition for
$G\odot H$. Thus, $a$ and $b$ have the same partition
representation, which is a contradiction. So, $pd(G\odot H)\ge
c(H)+1$. Now, let us suppose $pd(G\odot H)=c(H)+1$ and let
$\Pi(G\odot H)=\{A_1,A_2,...,A_{c(H)+1}\}$ be a resolving partition
for $G\odot H$. Now, let $S=\bigcup_{i=1}^{n}(S_i\cup \{v_i\})$ and
let  $u\in S$. Suppose $u\in A_j$, $j\in \{1,...,c(H)+1\}$. So, we
have that the partition representation of $u$ is given by
$$\begin{array}{ccccc}
    r(u|\Pi)=(1,1,..., & 1,0,1,& ..., &1,t,1, & ...,1), \\
     & j &   & i &
  \end{array}
$$
where $i,j\in \{1,...,c(H)+1\}$,  $i \ne j$,  and, by Lemma \ref{LemmaDist3}, $t\in \{1,2,3\}$.
Since  for every different vertices $a,b\in S$,
$r(a|\Pi)\ne r(b|\Pi)$, the maximum  number of possible
different partition representations for vertices of $S$ is given by
$(c(H)+1)(2c(H)+1),$ i.e., for  $t=1$ there are at most $c(H)+1$ different vectors and for $t\in \{2,3\}$ there are at most $2(c(H)+1)c(H)$.
Hence, $n(c(H)+1)=|S|\le (2c(H)+1)(c(H)+1)$ and, as a consequence, $n\le 2c(H)+1$. Therefore, if
$n > 2c(H)+1$, then $pd(G\odot H)\ge c(H)+2.$
\end{proof}

\begin{corollary} \label{lowerboundn+2}
Let $G$ be a  graph of order $n_1$ and let $n_2\ge 2$ be an integer. If
$n_1 > 2n_2+1$, then $$pd (G \odot K_{n_2})\ge n_2+2.$$
\end{corollary}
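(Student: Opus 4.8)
The plan is to obtain this statement as an immediate specialization of Theorem \ref{teo-G-c(H)} to the case $H = K_{n_2}$. First I would observe that $K_{n_2}$ is connected and isomorphic to a complete graph on $n_2$ vertices, so it consists of a single connected component which is itself complete of cardinality $n_2$. By the definition of $c(H)$ as the maximum cardinality of any connected component of $H$ isomorphic to a complete graph, this gives $c(K_{n_2}) = n_2$.

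Next I would verify that the hypotheses of Theorem \ref{teo-G-c(H)} are satisfied. Writing $n = n_1$ for the order of $G$ and substituting $c(H) = n_2$, the condition $n > 2c(H) + 1 \ge 5$ becomes $n_1 > 2n_2 + 1$ together with $2n_2 + 1 \ge 5$, that is, $n_2 \ge 2$. Both of these are precisely the hypotheses of the corollary. Applying Theorem \ref{teo-G-c(H)} then yields
$$pd(G \odot K_{n_2}) \ge c(K_{n_2}) + 2 = n_2 + 2,$$
as desired.

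Because the argument is a direct substitution, there is no genuine obstacle here; the only points that require (trivial) attention are the evaluation $c(K_{n_2}) = n_2$ straight from the definition and the observation that the two numerical conditions of the theorem translate exactly into $n_1 > 2n_2 + 1$ and $n_2 \ge 2$. I would also note that $G$ should be understood to be connected, so that $G \odot K_{n_2}$ is connected and its partition dimension is well defined, in accordance with the standing hypothesis of Theorem \ref{teo-G-c(H)}.
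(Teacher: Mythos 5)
Your proposal is correct and matches the paper's approach exactly: the corollary is stated without proof precisely because it is the direct specialization of Theorem \ref{teo-G-c(H)} to $H=K_{n_2}$, using $c(K_{n_2})=n_2$ and noting that $n_2\ge 2$ is equivalent to $2c(H)+1\ge 5$. Your added remark that $G$ must be connected (a hypothesis the corollary's wording drops but the theorem requires) is a legitimate and worthwhile clarification.
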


>From Theorem \ref{partition-plus-partition} and Corollary \ref{lowerboundn+2}
we obtain that if  $n_1 > 2n_2+1\ge 5$, then $pd(G)+n_2\ge pd (G \odot
K_{n_2})\ge n_2+2.$  Therefore, we obtain the following result.
\begin{remark}\label{Remarkn_2+2}
Let $n_1$ and  $n_2$ be  integers such that $n_1 > 2n_2+1\ge 5$.
 Then $$pd (P_{n_1} \odot K_{n_2})= n_2+2.$$
\end{remark}

By Remark \ref{Remarkn_2+2} we conclude that the inequalities in Theorem \ref{general-bound-pd-n1-n2}, Corollary \ref{boundOrderPd},  Theorem \ref{partition-plus-partition}, Corollary \ref{uperboundsumDim} and  Corollary \ref{lowerboundn+2} are tight.

An empty graph of order $n$, denoted by $N_n$,  consists of $n$ isolated nodes with no edges.
 In the following result $\beta(H)$ denotes the number of isolated vertices of a graph  $H$.

\begin{theorem}
Let $G$ be a connected graph of order $n\ge 2$ and let $H$ be any graph. If $n> \beta(H)\ge 2$, then $$pd(G\odot H)\ge \beta(H)+1.$$
\end{theorem}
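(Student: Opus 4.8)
The plan is to argue by contradiction, after first recording an easy lower bound. Write $\beta = \beta(H)$, and for each $i\in\{1,\dots,n\}$ let $I_i\subseteq V_i$ denote the set of $\beta$ vertices of $H_i$ that are isolated in $H$. First I would establish $pd(G\odot H)\ge\beta$ through a twin argument: any two vertices $u,w\in I_i$ satisfy $N_{G\odot H}(u)=N_{G\odot H}(w)=\{v_i\}$, so by Lemma \ref{first-lemma-part-Corona}(i) together with the fact that every vertex of $V_i\setminus\{u,w\}$ lies at distance $2$ from both $u$ and $w$ (their only neighbour is $v_i$), one gets $d_{G\odot H}(u,x)=d_{G\odot H}(w,x)$ for every $x\notin\{u,w\}$. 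Hence, if $u$ and $w$ were in the same set of a resolving partition $\Pi$, they would satisfy $r(u|\Pi)=r(w|\Pi)$, a contradiction; so the $\beta$ vertices of $I_i$ must occupy $\beta$ distinct sets of $\Pi$, forcing $pd(G\odot H)\ge\beta$.

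Next I would suppose, for contradiction, that $pd(G\odot H)=\beta$ and fix a resolving partition $\Pi=\{P_1,\dots,P_\beta\}$. The crucial observation is that, since the $\beta$ twins of $I_i$ lie in $\beta$ distinct sets and there are exactly $\beta$ sets, each $P_k$ contains precisely one vertex of $I_i$, and that vertex is adjacent to $v_i$. Consequently $d_{G\odot H}(v_i,P_k)\le 1$ for every $k$, with value $0$ exactly when $v_i\in P_k$ and value $1$ otherwise. Therefore $r(v_i|\Pi)$ consists of a single $0$ in the coordinate of the set containing $v_i$ and a $1$ in every remaining coordinate; in other words, $r(v_i|\Pi)$ is completely determined by which set of $\Pi$ the vertex $v_i$ belongs to.

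Finally I would invoke the hypothesis $n>\beta$ by pigeonhole: there are only $\beta$ possible vectors of the form just described, one per set, while there are $n>\beta$ vertices $v_1,\dots,v_n$. Thus two of them, say $v_i$ and $v_j$ with $i\ne j$, lie in the same set and satisfy $r(v_i|\Pi)=r(v_j|\Pi)$, contradicting that $\Pi$ is resolving. This rules out $pd(G\odot H)=\beta$, and together with $pd(G\odot H)\ge\beta$ gives $pd(G\odot H)\ge\beta+1$. I expect the main obstacle to be the middle step, namely recognizing that the equality $pd(G\odot H)=\beta$ forces every set of $\Pi$ to meet $I_i$ and thereby collapses each $r(v_i|\Pi)$ to a vector determined solely by the set of $v_i$; the twin argument and the final counting step are then routine.
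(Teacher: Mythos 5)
Your proof is correct and follows essentially the same route as the paper: the twin argument gives $pd(G\odot H)\ge\beta(H)$, and, assuming equality, the fact that every class of the resolving partition must contain exactly one isolated vertex of each copy $H_i$ forces the partition representations into too few possibilities, yielding a pigeonhole contradiction with $n>\beta(H)$. Your version is in fact slightly leaner: the paper counts representations over all isolated vertices together with the vertices of $G$ (at most $(\beta(H)+1)\beta(H)$ vectors against $n(\beta(H)+1)$ vertices of $S$), whereas you observe that pigeonholing only the $n$ vertices of $G$ --- whose vectors are forced to a single $0$ with $1$'s elsewhere, hence at most $\beta(H)$ possibilities --- already suffices.
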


\begin{proof}
We will proceed similarly to the proof of Theorem \ref{teo-G-c(H)}.
Let $S_i$ denote the set of isolated vertices of $H_i$, $i\in
\{1,...,n\}$.

 Since  different  vertices of $S_i$  belong to different sets of
any resolving partition for $G\odot H$, we have $pd(G\odot H)\ge
\beta(H)$. Let us suppose $pd(G\odot H)=\beta(H)$ and let
$\Pi(G\odot H)=\{A_1,A_2,...,A_{\beta(H)}\}$ be a resolving
partition for $G\odot H$. Now, let $S=\bigcup_{i=1}^{n}(S_i\cup
\{v_i\})$ and let $u\in S$. If $u\in A_j\cap S_j$, $j\in
\{1,...,n_1\}$, then the partition representation of $u$ is given by
$$\begin{array}{ccccc}
    r(u|\Pi)=(2,2,..., & 2,0,2,& ..., &2,t,2, & ...,2), \\
     & j &   & i &
  \end{array}
$$
with  $i,j\in \{1,...,\beta(H)\}$, $i\ne j$ and $t\in \{1,2\}$. On
the other side, if $u\in A_j\cap V$, then
$$\begin{array}{ccc}
    r(u|\Pi)=(1,1,..., & 1,0,1,& ...,1), \\
     & j &
  \end{array}
$$
with $j\in \{1,...,\beta(H)\}$. Thus, the maximum  number of
possible different partition representations for vertices of $S$ is
given by $(\beta(H)+1)\beta(H).$
Hence, $n(\beta(H)+1)=|S|\le \beta(H)(\beta(H)+1)$. Thus, $n\le
\beta(H)$. Therefore, if $n>\beta(H)$, then $pd(G\odot H)\ge
\beta(H)+1$.
\end{proof}

\begin{corollary} \label{lowerbound(n+1)Isolates}
Let $G$ be a  graph of order $n_1$ and let $n_2\ge 2$ be an integer. If
$n_1 > n_2$, then $$pd (G \odot N_{n_2})\ge n_2+1.$$
\end{corollary}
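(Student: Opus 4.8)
The plan is to obtain this corollary as a direct specialization of the Theorem immediately preceding it, by taking $H=N_{n_2}$. The only conceptual content is identifying the parameter $\beta(H)$ for the empty graph, after which everything reduces to checking that the numerical hypotheses line up.

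First I would recall, from the definition given in the text, that the empty graph $N_{n_2}$ consists of $n_2$ isolated nodes and no edges. Hence every one of its $n_2$ vertices is isolated, so $\beta(N_{n_2})=n_2$. Next I would verify the hypotheses of the Theorem with $H=N_{n_2}$. The assumption $n_1>n_2\ge 2$ forces $n_1\ge 3\ge 2$, so $G$ is a (connected) graph of order at least two as required; moreover $\beta(H)=n_2\ge 2$, and the Theorem's condition $n>\beta(H)$ becomes exactly the hypothesis $n_1>n_2$.

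With all hypotheses satisfied, applying the Theorem yields $pd(G\odot N_{n_2})\ge \beta(N_{n_2})+1=n_2+1$, which is the desired inequality. This mirrors precisely how Corollary \ref{lowerboundn+2} was deduced from Theorem \ref{teo-G-c(H)} via the identity $c(K_{n_2})=n_2$, with the role of $c(\cdot)$ now played by $\beta(\cdot)$.

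Since the argument is a pure substitution into an already-proved statement, I do not expect any genuine obstacle; the single point that warrants a line of justification is the equality $\beta(N_{n_2})=n_2$, which is immediate from the definition of $N_{n_2}$, together with the trivial observation that the inequality hypothesis $n_1>n_2$ coincides with $n>\beta(H)$.
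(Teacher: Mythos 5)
Your proposal is correct and is exactly the derivation the paper intends: the corollary appears without proof as an immediate specialization of the preceding theorem with $H=N_{n_2}$, using $\beta(N_{n_2})=n_2$ and the coincidence of $n>\beta(H)$ with $n_1>n_2$. The only cosmetic caveat is that connectedness of $G$ is not actually \emph{forced} by the numerical hypotheses, as your parenthetical suggests, but is implicitly required for $pd(G\odot N_{n_2})$ to be defined at all --- a looseness already present in the paper's own statement of the corollary.
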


\begin{proposition}
If $n_1\ge n_2\ge 2$, then $$pd (P_{n_1}\odot N_{n_2})=n_2+1.$$
\end{proposition}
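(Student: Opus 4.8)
The plan is to establish the two inequalities $pd(P_{n_1}\odot N_{n_2})\le n_2+1$ and $pd(P_{n_1}\odot N_{n_2})\ge n_2+1$ separately. I write $v_1,\dots,v_{n_1}$ for the path vertices (with $v_i\sim v_{i+1}$) and $u_{i1},\dots,u_{in_2}$ for the $n_2$ leaves attached to $v_i$, so that $d(u_{ij},v_k)=1+|i-k|$, $d(u_{ij},u_{kl})=2+|i-k|$ for $i\ne k$, $d(u_{ij},u_{il})=2$ for $j\ne l$, and $d(v_i,v_k)=|i-k|$. Since two leaves of the same copy are at equal distance from every vertex outside that copy, any resolving partition must place the $n_2$ leaves of a fixed copy in pairwise different classes; hence $pd(P_{n_1}\odot N_{n_2})\ge n_2$ for free, and the whole issue is to gain the extra $+1$ and to match it from above.

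For the upper bound I would exhibit a resolving partition with exactly $n_2+1$ classes, using one class as a \emph{ruler} that reads the copy index off the distance to an endpoint of the path. Concretely I take $\Pi=\{P_1,\dots,P_{n_2},P_{n_2+1}\}$ with $P_1=\{u_{11},\dots,u_{n_11}\}\cup\{v_2,\dots,v_{n_1}\}$, with $P_j=\{u_{1j},\dots,u_{n_1j}\}$ for $2\le j\le n_2$, and with $P_{n_2+1}=\{v_1\}$. The decisive computation is $d(u_{ij},P_{n_2+1})=d(u_{ij},v_1)=i$ and $d(v_i,P_{n_2+1})=i-1$, so the last coordinate already separates any two leaves lying in different copies; two leaves of the same copy lie in different classes among $P_1,\dots,P_{n_2}$; and the remaining coincidences (leaf versus $v_i$, and $v_i$ versus $v_k$) are killed by the first coordinate, which equals $0$ on the members of $P_1$, equals $1$ on $v_1$ and on the leaves $u_{ij}$ with $i\ge 2,\ j\ge 2$, and equals $2$ on the leaves $u_{1j}$ with $j\ge 2$. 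Finishing amounts to comparing one or two coordinates in each case, which is routine. I note that this construction needs only $n_1,n_2\ge 2$, so the upper bound is the easy half.

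For the lower bound I would split according to whether $n_1>n_2$ or $n_1=n_2$. If $n_1>n_2$ the inequality $pd(P_{n_1}\odot N_{n_2})\ge n_2+1$ is exactly Corollary \ref{lowerbound(n+1)Isolates} applied with $G=P_{n_1}$ and $H=N_{n_2}$ (here $\beta(N_{n_2})=n_2$), so no further work is needed. The delicate case is the boundary $n_1=n_2$, where the hypothesis $n_1>n_2$ of Corollary \ref{lowerbound(n+1)Isolates} fails and one must rule out a resolving partition into only $n_2$ classes directly. I would attempt a forced-structure analysis: if $\Pi=\{A_1,\dots,A_{n_2}\}$ resolves $P_{n_2}\odot N_{n_2}$, then since each copy contributes one leaf to each class, every class holds exactly one leaf of every copy; writing $c(i)$ for the class of $v_i$, the distance $2$ forced between leaves of a copy makes the representation of a leaf depend only on its own class together with $c(i)$, and the representation of $v_i$ depend only on $c(i)$. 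From this one forces $c$ to be injective and then hunts for two vertices sharing a representation.

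I expect this last step to be the real obstacle. The forced structure leaves almost no freedom, and producing the contradiction seems to need more than injectivity of $c$; I would scrutinise small $n_2$ first, since the boundary regime $n_1=n_2$ may genuinely behave differently from $n_1>n_2$, and I would be prepared to find that the clean value $n_2+1$ depends on the hypotheses more tightly here than the statement suggests. Pinning down (or correctly delimiting) the equality case is, in my view, the crux of the whole proposition.
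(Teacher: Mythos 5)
Your upper bound and your treatment of the lower bound when $n_1>n_2$ coincide in substance with the paper's proof: the paper also exhibits an explicit $(n_2+1)$-class partition (its classes are $A_1=\{v_1,u_{11}\}$, $A_2=\{v_i,u_{i1}: i\ge 2\}$ and $A_j=\{u_{i(j-1)}: 1\le i\le n_1\}$, a minor variant of yours), and it settles the lower bound by the same appeal to Corollary \ref{lowerbound(n+1)Isolates}. One small slip in your verification: the pair $u_{i1}$, $v_{i+1}$ lies entirely inside your class $P_1$, so it is separated neither by the first coordinate (both vertices get a $0$ there) nor by the last one (both are at distance $i$ from $v_1$); what separates it is a middle class $P_j$, $2\le j\le n_2$, since $d(u_{i1},P_j)=2\ne 1=d(v_{i+1},P_j)$. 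This is the only place where $n_2\ge 2$ is genuinely needed, and your construction does survive.

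The important point is your final paragraph: the obstacle you ran into at $n_1=n_2$ is not a defect of your argument but of the proposition itself. The paper's proof applies Corollary \ref{lowerbound(n+1)Isolates}, whose hypothesis is the strict inequality $n_1>n_2$, so the paper never proves the boundary case either; and in fact the statement is false there once $n_2\ge 3$. Take $n_1=n_2=n\ge 3$ and $\Pi=\{A_1,\dots,A_n\}$ with $A_k=\{v_k\}\cup\{u_{ik}: 1\le i\le n\}$ --- exactly the forced structure your analysis predicts (one leaf of each copy per class, with $c$ the identity). Then $r(v_i|\Pi)$ has a $0$ in position $i$ and $1$ elsewhere; $r(u_{ik}|\Pi)$ for $k\ne i$ has a $0$ in position $k$, a $1$ in position $i$ and $2$ elsewhere; and $r(u_{ii}|\Pi)$ has a $0$ in position $i$ and $2$ elsewhere. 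Since $n\ge 3$, a path vertex has at least two coordinates equal to $1$ while a leaf has at most one, and the leaf representations are pairwise distinct, so $\Pi$ is resolving and $pd(P_n\odot N_n)=n$, not $n+1$. (For $n_1=n_2=2$ the forced structure does collapse: the leaf of copy $1$ lying in the class of $v_2$ gets representation $(1,0)=r(v_2|\Pi)$, so there the value $n_2+1=3$ is correct.) The proposition is thus true exactly for $n_1>n_2\ge 2$ together with the sporadic case $n_1=n_2=2$, and your instinct that the equality case needed to be delimited, rather than proved, was right.
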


\begin{proof}
Let $V=\{v_1,...,v_n\}$ be the set of vertices of $P_{n_1}$ and, for each $v_i\in V$, let $V_i=\{u_{i1},...,u_{i{n_2}}\}$ be the set of vertices of the $i^{th}$ copy of $N_{n_2}$ in $P_{n_1}\odot N_{n_2}$. Let $\Pi=\{A_1,...,A_{n_2+1}\}$, where $A_1=\{v_1,u_{11}\}$, $A_2=\{v_i,u_{i1}: i\in \{2,...,n_1\}\}$ and $A_j=\{u_{i(j-1)}: i\in \{1,...,n_1\}\}$ for $j\in \{3,..,n_2+1\}$.
 Note that $d_{P_{n_1}\odot N_{n_2}}(v_1,A_2)\ne d_{P_{n_1}\odot N_{n_2}}(u_{11},A_2)$. Moreover, for two different vertices $x,y\in A_j$, $j\in \{3,...,n_{2}+1\}$,  we have  $d_{P_{n_1}\odot N_{n_2}}(x,A_1)\ne d_{P_{n_1}\odot N_{n_2}}(y,A_1)$. Now on we suppose $x,y\in A_2$. If $x,y\in V$ or $x,y\in V_i$, for some $i$, then $d_{P_{n_1}\odot N_{n_2}}(x,A_1)\ne d_{P_{n_1}\odot N_{n_2}}(y,A_1)$. Finally, if $x\in V$ and $y \not\in V$, then $d_{P_{n_1}\odot N_{n_2}}(x,A_3)\ne d_{P_{n_1}\odot N_{n_2}}(y,A_3)$. Therefore,
  $\Pi$ is a resolving partition for $P_{n_1}\odot N_{n_2}$ and, as a consequence, $pd (P_{n_1}\odot N_{n_2})\le n_2+1.$ By corollary \ref{lowerbound(n+1)Isolates} we conclude the proof.
\end{proof}

\section*{Acknowledgements}
  This work was partly supported  by the Spanish Ministry of Education through
projects TSI2007-65406-C03-01 \lq\lq E-AEGIS" and CONSOLIDER INGENIO
2010 CSD2007-00004 \lq\lq ARES''.

\end{document}